\newtheorem{theorem}{Theorem}[section]
\newtheorem{lemma}[theorem]{Lemma}
\begin{document}

\title{Autobiographical Numbers}

\author{Tanya Khovanova}
\date{March 2, 2008}
\maketitle

\begin{abstract}
I introduce autobiographical numbers as defined in A046043 (see Online Encyclopedia of Integer Sequences \cite{OEIS}). I continue by defining and analyzing biographies, curricula vitae and complete life stories of numbers. I end with the definition of mutually-praising number pairs.
\end{abstract}


\section{Definition}

Do you know that 1210 is the smallest autobiographical number? You probably do not know what an autobiographical number is. You might well think that such a number should be a pompous self-centered figure whose only purpose in life is to describe itself. 

Here is the formal definition: an autobiographical number is a number $N$ such that the first digit of $N$ counts how many zeroes are in $N$, the second digit counts how many ones are in $N$ and so on. In our example, 1210 has 1 zero, 2 ones, 1 two and 0 threes. 

\section{All Autobiographical Numbers}

Let us find all autobiographical numbers using the ``zoom-in'' method.

\begin{enumerate}
\renewcommand{\labelenumi}{\alph{enumi}.}
\item By definition, the autobiographies can not have more than 10 digits. It is nice to know that these egotistical numbers cannot be too grand.
\item The sum of the digits in an autobiography equals the number of the digits. Consequently, the sum of the digits is not more than 10.
\item The first digit is the number of zeroes. As you know, self-respecting integers do not start with a zero. Hence, the number of zeroes is not zero.
\item Subtracting statement ``c'' from statement ``b'' above, we get a resulting statement that the sum of all the digits, except for the first one, is equal to the number of non-zero digits plus 1.
\item This means, other than the first digit, the set of all other non-zero digits consists of several ones and 1 two.
\item In particular, the number of ones is either 0, 1 or 2. 
\end{enumerate}

Now we continue zooming-in in three different directions depending on the number of ones. In this paper, I will consider only the case in which there are no ones; I leave the other two cases to the reader. 

\begin{itemize}
\item If the number of ones is zero, then the only non-zero non-first digit of such a number is 2.
\item This 2 should be included in the autobiography; since the third digit of the number is not zero, it must be 2.
\item The number has 2 twos.
\item It must be 2020.
\end{itemize}

Here is the full set of autobiographical numbers: 1210, 2020, 21200, 3211000, 42101000, 521001000, 6210001000.

This is the sequence A104786 in the Online Encyclopedia of Integer Sequences (OEIS) \cite{OEIS}, where I first encountered the autobiographical numbers.

\section{Biographies}

Consider this: if there is a notion of an autobiography of a number, then it would be logical to expect there to be a notion of a biography of a number. What would be the logical candidate for a biography of a number? Let us say that given a number $N$, its biography is another number $M$ such that the first digit of $M$ is the number of zeroes in $N$, the second digit of $M$ is the number of ones in $N$ and so on.

Of course, for a number to have a biography, we need to assume that none of its digits are present more than nine times. There are several problems with the definition of a biography.

The first problem is that if $N$ does not have zeroes, its biography starts with a zero. As numbers do not start with 0, that biography is not a number! Alternatively, if $N$ starts with 0, it can have a biography but $N$ is not a number. Luckily for this article, a digit string starting with zeroes cannot be an autobiographical string, because the number of zeroes is not zero. It is a relief that those illegitimate strings that are trying to pretend to be numbers cannot actually be autobiographical.

The second problem with biographies is that a number can have many biographies. Indeed, if a number does not have nines, you can remove or add zeroes at the end of a biography to get another biography of the same number. Since mathematicians like to define things uniquely, we might consider it a disadvantage if a number has several biographies. In real life it is possible to have many biographies of a person. So the second problem is not a big problem. I will call the shortest possible biography of a number \textit{the curriculum vitae (CV)} and the longest possible biography \textit{the complete life story (CLS)}.

The third problem is that numbers with the same digits in different permutations have the same biographies. So, in a sense, a biography follows the life not of a number, but rather the set of its digits.

\section{Curricula Vitae}

Suppose for now we allow a biography to start with 0. Also, let us choose the curriculum vitae --- the shortest biography, if there are several. Let us build a sequence of CVs, where an element in the sequence is the CV of the previous one. As an example, we start with 0. Zero's CV is 1, one's CV is 01, continuing that we get the following sequence: 0, 1, 01, 11, 02, 101, 12, 011, 12, 011, 12, $\ldots$. You can see that the CVs' sequence fell into a cycle in this case. I tried sequences of CVs starting with numbers up to 10,000,000. I found that they fall into two cycles. One cycle is described above, and the other one is: 22, 002, 201, 111, 03, 1001, 22. Can we find another cycle or, alternatively, can we prove that all sequences of CVs converge to only these two cycles?

Before discussing the cycles let us discuss numbers that allow an infinite sequence of CVs. Given a number $N$ let us denote its sequence of CVs as $CV_n(N)$. That is, $CV_n(N)$ is the curriculum vitae of $CV_{n-1}(N)$. I will assume that the sequence of CVs starts with index 0: $CV_0(N) = N$. We know that not every number can have a CV. If a number has a digit repeated more than 9 times its CV cannot be defined. What about the sequence of CVs? Can we describe the numbers $N$ that allow us to define an infinite sequence of CVs, $CV_n(N)$?

\begin{lemma}\label{CVSeqdefined} Numbers for which an infinite sequence of CVs cannot be defined are of three categories:
\begin{itemize}
\item Numbers that have a digit repeated more than 9 times
\item Numbers that have all the digits from 0 to 9 repeated the same number of times
\item Numbers that have their digits with the set of multiplicities defined by the set  $\{0, 1, 2, 3, 4, 5, 6, 7, 8, 9\}$.
\end{itemize}
\end{lemma}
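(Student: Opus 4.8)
The plan is to understand when the CV operation fails to produce another valid number, and then to characterize exactly which numbers lead, after finitely many steps, to such a failure. First I would set up the framework precisely. Given a number $N$, its CV is a string whose $k$-th digit (indexing from $0$) records the multiplicity of the digit $k-1$ in $N$, truncated to the shortest length that captures all nonzero multiplicities. Since the CV depends only on the multiset of digits of $N$ (problem three in the preceding discussion), I would work entirely with the multiplicity vector $m(N) = (m_0, m_1, \dots, m_9)$, where $m_j$ counts the occurrences of digit $j$. The CV then has multiplicity-digit string given by reading off these $m_j$ values, and the sequence of CVs is well-defined as long as every term is a legitimate number with no digit repeated more than nine times.

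Next I would identify the two obstructions listed in the lemma as the only ways the sequence can terminate. The CV fails to be definable at some stage precisely when the \emph{next} string either has a digit exceeding $9$ (first bullet, directly) or fails to be a number because it begins with $0$. A CV begins with $0$ exactly when $m_0 = 0$, i.e.\ the current number has no zeroes. So the real content is: which numbers eventually map to a string with $m_0 = 0$ or with some $m_j > 9$? I would argue that once we pass the first step the total number of digits is small (since the CV of anything has at most $10$ digits, its digit-sum is at most the length, which is at most $10$), so the dynamics quickly enter a bounded regime and the failure conditions become checkable. The key observation to formalize is that the CV of $N$ has no leading-zero problem iff $N$ actually contains a zero, so a number kills the sequence at the very next step iff it contains no zero at all.

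Then I would translate the second and third bullets into this multiplicity language. The second bullet --- all digits $0$ through $9$ appearing the same number of times --- means $m_0 = m_1 = \cdots = m_9 = c$ for some $c \ge 1$; its CV is the constant string $\underbrace{cc\cdots c}_{10}$, which contains no zero, so its CV in turn starts with $0$ and the sequence dies. The third bullet --- multiplicities forming the set $\{0,1,2,\dots,9\}$ --- means the ten multiplicities $m_j$ are a permutation of $0,1,\dots,9$; I would check that the CV of such a number is a permutation of $0123456789$, hence has the same multiplicity set $\{0,1,\dots,9\}$, and so this condition is a fixed point of the operation (as a property), and it produces a number with exactly one zero whose own CV eventually stalls. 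The cleanest way to nail these two cases is to show that each describes a set of multiset-types closed under the CV map, on which the map can be iterated only finitely before hitting a leading zero or an overflow; I would verify the short orbits by direct computation since the state space here is tiny.

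The main obstacle I anticipate is the completeness direction: proving these three categories are \emph{exhaustive}, i.e.\ that every \emph{other} number does admit an infinite CV sequence. For this I would exhibit the cycles found empirically (the two cycles $\{11, 02, 101, 12, 011\}$-type and $\{22, 002, 201, 111, 03, 1001\}$) and argue that the CV map, restricted to the bounded set of multiset-types that can arise after one step, always eventually enters one of these cycles unless it is forced into one of the three failure categories. The delicate point is ruling out that a generic number sneaks into a leading-zero or overflow configuration: I would show that the total digit count is non-increasing (or bounded) under CV after the first application, reduce to a finite check over the finitely many reachable multiset-types, and confirm computationally that the only absorbing failures are exactly the three listed families. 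I expect the bookkeeping of ``multiset-type'' versus ``actual string,'' and the careful handling of the trailing-zero ambiguity in the CV definition, to be where most of the care is required.
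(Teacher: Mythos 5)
There is a genuine gap, and it starts at the very foundation of your argument: you misidentify the failure modes of the CV iteration. In the section preceding this lemma the paper explicitly adopts the convention that a CV is allowed to start with $0$ (``Suppose for now we allow a biography to start with 0''); CVs are digit \emph{strings}, not numbers, which is why the cycle $12, 011, 12, \ldots$ is legitimate even though $011$ has a leading zero. The \emph{only} obstruction to defining the next CV is a digit repeated more than $9$ times, since a multiplicity of $10$ or more cannot be recorded as a single digit. Under your convention, where a leading zero kills the sequence, the lemma you are trying to prove is simply false: the number $12$ contains no zeroes, so its CV $011$ would be illegal and the sequence would die at step one, yet $12$ belongs to none of the three categories (and in fact has an infinite CV sequence). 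This error also propagates into your analysis of the second bullet: the CV of the constant string $cc\cdots c$ (ten copies of $c$) fails to exist because the digit $c$ occurs $10$ times, an overflow --- not because anything ``starts with $0$.'' Separately, your claim that the third category is a fixed point of the CV map is wrong: if the multiplicities of $N$ form the set $\{0,1,\ldots,9\}$, then its CV is a permutation of $0123456789$, and the multiplicity set of \emph{that} string is $\{1\}$ (every digit appears exactly once), which is the second category with repetition count $1$, not the third category again. The correct structure is a chain, not a fixed point: third category $\to$ second category $\to$ first category $\to$ undefined, so the three families fail at steps $3$, $2$, and $1$ respectively.

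The exhaustiveness direction, which you defer to a computer search over reachable multiset types, has a short clean argument that your setup misses (and your search, run under the leading-zero convention, would return wrong answers). Since a CV records at most ten multiplicities, every term $CV_n(N)$ with $n \geq 1$ has at most $10$ digits. Hence if some term that is itself a CV suffers overflow, it must consist of exactly ten copies of a single digit $x \geq 1$, i.e.\ be $xxxxxxxxxx$. Trace backwards: the CV-preimage of $xxxxxxxxxx$ has every digit $0$ through $9$ repeated $x$ times, so it has $10x$ digits; if that preimage is itself a CV, then $10x \leq 10$ forces $x = 1$, making it a permutation of $0123456789$. Tracing back once more, the preimage of such a permutation has digits with multiplicities $\{0,1,\ldots,9\}$, hence $45$ digits, and therefore can never be a CV. So failure can only occur at step $1$, $2$, or $3$ of the iteration, and reading off what $N$ must be in each case gives exactly the three listed categories; every other number has an infinite CV sequence. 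No finite enumeration or computation is needed for this lemma.
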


\begin{proof}
It is easy to see that for $n > 0$, $CV_n(N)$ has no more than 10 digits, and, consequently, for $n > 1$, $CV_n(N)$ has a sum of digits no bigger than 10. This means, the only CV that can not have its own CV must have 10 digits, and they must all be the same. Let us denote this CV by $xxxxxxxxxx$, where $x$ is a digit between 1 and 9 (obviously, a CV cannot consist of zeroes only). If the number $xxxxxxxxxx$ is a CV of some number $M$, then $M$ has all the digits from 0 to 9 repeated $x$ times. Can $M$ itself be a CV? Only if it has no more than 10 digits. This means, only if $x = 1$, the number $xxxxxxxxxx$ can be a CV of a CV. In this case $M$ has $\{0, 1, 2, 3, 4, 5, 6, 7, 8, 9\}$ as its set of digits. 

We see that numbers with a digit repeated more than 9 times cannot have a CV. Also, numbers with all the digits from 0 to 9 repeated the same number of times cannot have a CV of a CV. In addition, numbers that have their digits with the set of multiplicities as $\{0, 1, 2, 3, 4, 5, 6, 7, 8, 9\}$ cannot have a CV of a CV of a CV. All other numbers allow us to define an infinite sequence of CVs.
\end{proof}

From now on we will consider only numbers $N$ for which the infinite sequence of CVs exists. We can use the methods from the proof above for the following lemma:

\begin{lemma}\label{nosamedigits}
If, for $n > 2$, $CV_n(N)$ has all the same digits, its length can not be more than 5.
\end{lemma}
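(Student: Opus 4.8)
The plan is to turn the hypothesis that $CV_n(N)$ is a repeated single digit into a rigid structural description of its predecessor $CV_{n-1}(N)$, and then invoke the digit-sum bound already obtained in the proof of Lemma~\ref{CVSeqdefined}. Write $CV_n(N) = \underbrace{xx\cdots x}_{k}$, a string of $k$ copies of one digit $x$; since a CV can never consist solely of zeroes, we have $1 \le x \le 9$, and we must show $k \le 5$.

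First I would read off what this repdigit says about $CV_{n-1}(N)$. By the definition of the CV, the digit in position $i$ (counting from $0$) of $CV_n(N)$ records how many times the digit $i$ occurs in $CV_{n-1}(N)$. Since every one of the $k$ positions holds the value $x$, the string $CV_{n-1}(N)$ must contain each of the digits $0, 1, \ldots, k-1$ exactly $x$ times. Moreover, because the CV is the shortest biography and so carries no trailing zeroes, its final position being nonzero forces $k-1$ to be the largest digit occurring in $CV_{n-1}(N)$, so no digit $\ge k$ appears. Thus the multiset of digits of $CV_{n-1}(N)$ is precisely $\{0, 1, \ldots, k-1\}$, each with multiplicity $x$.

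Next I would compute the digit sum of $CV_{n-1}(N)$ from this description, obtaining $x(0 + 1 + \cdots + (k-1)) = x\,k(k-1)/2$. The key input is the bound established in the proof of Lemma~\ref{CVSeqdefined}: for every index $m > 1$ the string $CV_m(N)$ has digit sum at most $10$. Because the hypothesis is $n > 2$, we have $n - 1 > 1$, so this bound applies to $CV_{n-1}(N)$, giving $x\,k(k-1)/2 \le 10$. Using $x \ge 1$ this collapses to $k(k-1) \le 20$, whence $k \le 5$ (since $5 \cdot 4 = 20$ while $6 \cdot 5 = 30$), which is the claim.

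I expect no serious obstacle; the one point demanding care is the bookkeeping in the second step, namely recognizing that a repdigit CV pins down \emph{both} the multiplicities (all equal to $x$) and the exact digit range ($0$ through $k-1$, with nothing larger) of its predecessor. Once that structural reading is secured, the length bound is immediate from the digit-sum inequality, and the hypothesis $n > 2$ is used precisely to guarantee that $CV_{n-1}(N)$ is itself a CV of a CV, so that the digit-sum bound of at most $10$ is available.
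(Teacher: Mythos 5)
Your proof is correct and takes essentially the same route as the paper: both translate the repdigit hypothesis into the fact that $CV_{n-1}(N)$ must contain each of the digits $0,\ldots,k-1$ with equal multiplicity, and both then invoke the digit-sum bound (at most $10$ for a CV of a CV) established in the proof of Lemma~\ref{CVSeqdefined}. The only difference is cosmetic: the paper first applies that bound to $CV_n(N)$ itself to force the repeated digit to be $1$ before deriving the contradiction, whereas you carry the general digit $x$ through the single inequality $x\,k(k-1)/2 \le 10$, which is a slightly more streamlined bookkeeping of the same argument.
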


\begin{proof}
If a CV has all the same digits, this digit cannot be zero by definition. If a CV of a CV with all the same digits has length more than five, then this digit cannot be more than 1, because the sum of the digits of a CV of a CV cannot be more than 10. Suppose a CV of a CV of some number $M$ has at least 6 ones. What is the smallest sum of digits that a CV of $M$ has? Obviously, the smallest sum is when the CV of $M$ consists of the digits 0, 1, 2, 3, 4 and 5. In this case the sum of digits of the CV of $M$ is more than 10. This means that the CV of $M$ cannot be a CV of a CV. Therefore, $M$ can not be a CV itself.
\end{proof}

Let us define a \textit{height} of a CV as a maximum of two numbers: the largest digit plus 1 or the number of digits.

\begin{lemma}
Starting with $n > 2$, if the height of $CV_n(N)$ is more than 5, it cannot go up.
\end{lemma}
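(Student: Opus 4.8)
The plan is to track how the two quantities defining the height transform under taking a CV, and then to reduce everything to controlling the largest multiplicity of a digit. Write $C = CV_n(N)$ and let $C' = CV_{n+1}(N)$ be its CV; let $L$ be the length of $C$, let $D$ be the largest digit occurring in $C$, and let $m$ be the largest multiplicity with which any single digit value occurs in $C$. The first step is to record two structural identities. Since the $i$-th digit of $C'$ counts the occurrences of the digit value $i$ in $C$, and since the CV is the shortest biography (trailing zeroes trimmed), the last nonzero position of $C'$ is exactly $D$, so $C'$ has length $D + 1$; moreover the largest digit of $C'$ is precisely $m$. Hence $h(C) = \max(D + 1, L)$ while $h(C') = \max(m + 1, D + 1)$, where $h$ denotes height.

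The key observation is that both heights contain the common term $D + 1$, so proving $h(C') \le h(C)$ reduces to the single inequality $m + 1 \le h(C)$. First I would handle the generic case in which $C$ does not consist of a single repeated digit: then at least two distinct values occur among the $L$ positions of $C$, so no value can occupy all of them, giving $m \le L - 1$ and therefore $m + 1 \le L \le h(C)$, as desired.

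The remaining case, in which all digits of $C$ are equal, is the crux, because there $m = L$ and the naive bound $m + 1 \le L$ fails by exactly one. This is precisely where I would invoke Lemma~\ref{nosamedigits}: for $n > 2$ an all-equal CV has length $L \le 5$. Combined with the hypothesis $h(C) > 5$, this forces the height to be realized by the digit term, i.e. $D + 1 > 5$, so $D \ge 5 \ge L = m$. Then $m + 1 = L + 1 \le D + 1 = h(C)$, and in fact the height stays exactly constant in this case. I expect this all-equal case to be the main obstacle, since it is the only place where the coarse multiplicity bound is insufficient and where both hypotheses (the condition $n > 2$, entering through Lemma~\ref{nosamedigits}, and the assumption that the height exceeds $5$) must be used together. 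Once it is dispatched, the two cases combine to give $h(C') \le h(C)$, so the height cannot go up.
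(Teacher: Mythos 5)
Your proof is correct and follows essentially the same route as the paper's: both rest on the two identities (length of the CV equals largest digit plus one; largest digit of the CV equals the maximal multiplicity, which is at most the length, with strict inequality unless all digits coincide) and both invoke Lemma~\ref{nosamedigits} to dispatch the all-equal-digits case. Your version merely makes explicit the final step the paper leaves implicit, namely that the hypothesis $h > 5$ forces the height to be realized by the largest digit, so the all-equal case cannot increase the height.
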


\begin{proof}
The largest digit of the CV of a number $N$ cannot be more than the number of digits of $N$. The number of digits of the CV of $N$ is the largest digit of $N$ plus 1. Hence, the only case when the height of the CV of $N$ can be bigger than the height of $N$ is when $N$ has all the same digits, say $a$, and the number of digits is more than $a$. By lemma \ref{nosamedigits}, starting with $n > 2$, if $CV_n(N)$ has all the same digits, the number of digits cannot be more than 5.
\end{proof}

A stronger fact is true. Namely, in the sequence of CVs, the height goes down eventually until it is less than 6. To prove this we need to do a finer analysis than in the previous proof. Suppose for some large $n$, the height of all the consecutive CVs stabilizes at some number $h$ bigger than 5. I will consider 3 cases:

\begin{enumerate}
\item The largest digit of $CV_n(N)$ is $h-1$, and $CV_n(N)$ has $h$ digits (both the number of digits and the largest digit define the height).
\item The largest digit of $CV_n(N)$ is less than $h-1$, and $CV_n(N)$ has $h$ digits (the number of digits defines the height). 
\item The largest digit of $CV_n(N)$ is $h-1$, and $CV_n(N)$ has less than $h$ digits (the largest digit defines the height). 
\end{enumerate}

If $CV_n(N)$ belongs to the second case, then $CV_{n+1}(N)$ has less than $h$ digits. For $CV_{n+1}(N)$ to have the height $h$, one of the digits of $CV_{n+1}(N)$ has to be  $h-1$. That means $CV_n(N)$ has one of its digits repeated $h-1$ times. This digit could be either zero or one. In this case, we can calculate $CV_{n+2}(N)$ explicitly, and it will have a lower height. That means, by our assumption that the height is stable, that $CV_n(N)$, as well as all the consecutive CVs, can not belong to the second case.

If $CV_n(N)$ belongs to the third case, then $CV_{n+1}(N)$ belongs to the second case. Hence, by the assumption, $CV_n(N)$, as well as all the consecutive CVs, should belong to the first case.

If $CV_n(N)$ belongs to the first case, then $CV_{n+1}(N)$ stays in the first case only if $CV_n(N)$ consists of one digit $h$, and all other digits are the same. That same digit could be either 0 or 1, and in both cases, $CV_{n+2}(N)$ is $(h-2)10000..1$ and has lower height. 

Hence, the height goes down.

Thus, we proved that starting from a big index the sequence of CVs has heights not more than 5. From here we can use a computer to check all the numbers up to 100,000, which I did. I leave it as an exercise to prove that there are two cycles without a computer. Thus, I actually proved the theorem:

\begin{theorem}
If a number allows an infinite sequence of CVs, this sequence falls eventually into one of two cycles:
\begin{itemize}
\item 12, 011
\item 22, 002, 201, 111, 03, 1001
\end{itemize}
\end{theorem}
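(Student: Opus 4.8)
My plan is to convert the theorem into a finite computation by combining the height bound just established with two exact invariants of the CV map. Writing $\ell(s)$ for the length of a string $s$ and $\sigma(s)$ for its digit sum, note that since $CV_{n+1}(N)$ is the CV of $CV_n(N)$, its digit sum equals the number of digits of $CV_n(N)$ and its length is one more than the largest digit of $CV_n(N)$; in symbols $\sigma(CV_{n+1}) = \ell(CV_n)$ and $\ell(CV_{n+1}) = (\text{largest digit of } CV_n) + 1$. By the height analysis above there is an index $N_0$ beyond which every $CV_n(N)$ has length at most $5$ and no digit exceeding $4$, and then the first invariant gives $\sigma(CV_n) \le 5$ as well. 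Thus from some point on the sequence lives entirely inside the finite set $\mathcal{T}$ of genuine CV strings (no trailing zero) of length at most $5$, with every digit at most $4$ and digit sum at most $5$.

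First I would record that, because a CV depends only on the multiset of digits of its argument, the CV map is a genuine function $f$ on strings and $\mathcal{T}$ is finite. Hence every trajectory entering $\mathcal{T}$ is eventually periodic, and the theorem reduces to two finite claims: that the only cycles of $f$ among the recurrent states of $\mathcal{T}$ are the stated $2$-cycle $\{12, 011\}$ and $6$-cycle $\{22, 002, 201, 111, 03, 1001\}$, and that every transient state feeds into one of them. Both cycles are confirmed by computing $f$ once around each.

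To keep the enumeration manageable by hand I would organize $\mathcal{T}$ by digit sum $\sigma$, which the relation $\sigma(CV_{n+1}) = \ell(CV_n)$ ties to the length of the previous term, and peel off the largest values first. The largest digit occurring anywhere in either cycle is the $3$ of $03$, so no cycle contains a $4$; and a term can contain a $4$ only when some digit value occurred exactly four times one step earlier. I would therefore check that the finitely many \emph{reachable} terms containing a $4$ all flow, in a bounded number of steps, into the core of terms with maximal digit at most $3$, length at most $4$, and digit sum at most $4$ (for instance one verifies a chain such as $41 \to 01001 \to 32 \to 0011 \to 22$, landing in the six-cycle). On that small core the states can be listed outright and $f$ computed, exhibiting exactly the two cycles together with the short tails that merge into them, and in particular ruling out any third cycle.

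The main obstacle is not any single computation but the bookkeeping that makes the finite analysis airtight, specifically ruling out spurious escapes and spurious states. For example $11111$ lies in $\mathcal{T}$ yet $f(11111) = 05$ has height $6$, so $f$ does not literally map all of $\mathcal{T}$ into itself; the resolution is that $11111$ has no preimage in $\mathcal{T}$, since its only predecessor is a permutation of $01234$ with digit sum $10 > 5$, so it is unreachable in the tail and lies outside the recurrent set. A parallel issue is that degenerate strings whose only predecessor is all zeros (such as the single digit $4$) are likewise unreachable, which is what actually prevents the digit $4$ from being regenerated indefinitely. Carefully separating the reachable recurrent states from such artifacts, so that $f$ restricts to a self-map whose cycle structure can be read off, is the delicate part of the argument; the two invariants above are precisely what shrink the remaining case check to a size one can carry out by hand.
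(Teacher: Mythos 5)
Your proposal is correct, and its skeleton is the same as the paper's: use the previously established height analysis to confine the tail of the CV sequence to a finite set of strings, then settle the cycle structure by a finite check. Where you differ is in how that finite check is carried out. The paper simply runs a computer over all numbers up to $100{,}000$ and explicitly leaves a computer-free argument ``as an exercise''; your proposal is, in effect, a solution to that exercise. You shrink the state space further with the two exact invariants $\sigma(CV_{n+1}) = \ell(CV_n)$ and $\ell(CV_{n+1}) = (\text{largest digit of } CV_n)+1$ (both correct for shortest biographies), organize the search by digit sum, and prune by reachability. Notably, you address a point the paper's exposition glosses over: the height-$\le 5$ region is \emph{not} literally invariant under the CV map (your example $f(11111)=05$ is right), and your resolution --- that $11111$ has no predecessor of digit sum $\le 5$, hence cannot recur in the tail --- is exactly the kind of bookkeeping needed to make the non-computer argument airtight; your sample chains such as $41 \to 01001 \to 32 \to 0011 \to 22$ all check out. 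What the paper's route buys is brevity and robustness (a brute-force search needs no case analysis); what yours buys is a human-verifiable proof and a cleaner identification of which states are genuinely recurrent. The one caveat is that your core enumeration is outlined rather than fully listed, so to be a complete proof the small digit-sum-$\le 4$ core would still have to be written out state by state; but that is a finite, clearly bounded task, and the paper's own proof is no more explicit at the corresponding step.
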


Now let us move on to broader biographies --- complete life stories.

\section{Complete Life Story}

Let us build the sequence of complete biographies, that is, life stories, starting with 0: 0, 1000000000, 9100000000, 8100000001, 7200000010, 7110000100, 6300000100, 7101001000, 6300000100, $\ldots$. We see that this sequence falls into a cycle of length two. Will this happen for every number?

It is easy to see that a curriculum vitae for a number can be defined iff a life story can be defined. What about sequences? For which numbers can we define an infinite sequence of life stories? By arguments similar to those in lemma \ref{CVSeqdefined} we can see that numbers that allow infinite sequences of life stories are the same as numbers allowing infinite sequences of curricula vitae.

Let us introduce some notation first. Given a number $N$, we build a sequence of life stories denoted as $CLS_n(N)$, where $CLS_0(N) = N$, and the life story of $CLS_n(N)$ is $CLS_{n+1}(N)$. From now on we assume that $N$ is a number that allows an infinite series of life stories.

\begin{lemma}
For $n > 3$, $CLS_n(N)$ has at least 6 zeroes.
\end{lemma}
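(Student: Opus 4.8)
The plan is to translate the statement about zeroes into a statement about the number of \emph{distinct} digit values in the preceding term. First I would record two structural facts: for every $n \geq 1$ the string $CLS_n(N)$ has exactly ten digits (it records the counts of the ten possible digits $0,\dots,9$), and for every $n \geq 2$ its digit sum equals the length of $CLS_{n-1}(N)$, namely $10$. The crucial observation is that the $(v+1)$-st digit of $CLS_n(N)$ counts the occurrences of the value $v$ in $CLS_{n-1}(N)$, so this digit is zero exactly when $v$ is absent from $CLS_{n-1}(N)$. Hence the number of zeroes in $CLS_n(N)$ equals $10$ minus the number of \emph{distinct} digit values occurring in $CLS_{n-1}(N)$, and the problem reduces to bounding that count.

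Next I would prove the elementary counting bound: a ten-digit string whose digits sum to $10$ has at most four distinct nonzero values, since four distinct positive integers already sum to at least $1+2+3+4=10$. Applying this to $CLS_{n-1}(N)$ (legitimate once $n-1 \geq 2$) shows it has at most four distinct nonzero values, hence at most five distinct values altogether, so $CLS_n(N)$ has at least five zeroes for every $n \geq 3$. This is the easy half, and it already lands within one of the target.

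The work is in upgrading ``five'' to ``six,'' that is, in excluding the single extremal configuration that realizes five distinct values. Equality forces the four nonzero values of $CLS_{n-1}(N)$ to be exactly $1,2,3,4$, each occurring once, so $CLS_{n-1}(N)$ is a permutation of $0000001234$. I would rule this out by stepping one term further back: such a $CLS_{n-1}(N)$ arises only if $CLS_{n-2}(N)$ has exactly four distinct values with multiplicities $\{1,2,3,4\}$. Since $CLS_{n-2}(N)$ also sums to $10$ (for $n-2\geq 2$), a short case check shows $0$ must be one of those four values and in fact pins $CLS_{n-2}(N)$ down to a permutation of $0000111223$. There $0$ occurs four times, so $CLS_{n-2}(N)$ has exactly four zeroes; but by the first paragraph that number equals $10$ minus the distinct-value count of $CLS_{n-3}(N)$, forcing $CLS_{n-3}(N)$ to use six distinct values, which contradicts the counting bound of the second paragraph.

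The main obstacle is that this back-substitution contradiction needs every index involved to be at least $2$, so it settles the claim cleanly only for $n \geq 5$; the remaining index $n=4$ escapes the generic argument, because $CLS_1(N)$ need not sum to $10$ and so the distinct-value cap on $CLS_1(N)$ is unavailable. I therefore expect the base case $n=4$ to be the real difficulty: one must examine by hand exactly which shapes $CLS_1(N)$ and $CLS_2(N)$ can take, since the descending configurations $0000001234$ and $0000111223$ sit precisely at the boundary where the bound is tight. I would finish by checking these few extremal pre-image shapes directly, and I anticipate that this boundary bookkeeping, rather than any single clever idea, is where the care is concentrated.
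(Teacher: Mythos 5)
Your argument is sound where it applies, and its second half is genuinely different from the paper's. The paper never analyzes the equality configuration; instead it observes (with garbled indices --- it writes $CLS_2$ and $CLS_3$ where it means $CLS_3$ and $CLS_4$) that once the first digit is at least 5 and there are at least five zeroes, the remaining digits sum to at most 5, so at most four values are distinct and the next term has at least six zeroes. Your route --- characterizing the tight case as a permutation of 0000001234, pulling back to 0000111223, and then deriving a contradiction from six distinct values one step further back --- replaces that counting with an extremal-configuration analysis. Both arguments need every index they touch to have digit sum 10, and so both prove the statement exactly for $n \geq 5$; neither covers $n = 4$.

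Your suspicion that $n = 4$ is the real issue is correct in the strongest possible sense: the lemma is false there, so the ``boundary bookkeeping'' you hoped would finish the proof cannot exist. The extremal chain you identified is realizable. Let $N$ have five 0s, four 1s, three 2s, two 3s, two 4s, one 5 and one 6, for instance $N = 654433222111100000$. Then $CLS_1(N) = 5432211000$, which is free to have six distinct values because its digit sum is 18, not 10; and then $CLS_2(N) = 3221110000$, $CLS_3(N) = 4321000000$, and $CLS_4(N) = 6111100000$, which has only five zeroes. So the correct statement is ``for $n > 4$,'' which is exactly what both your argument and the paper's (corrected) argument prove; the paper's stated range $n > 3$ is an off-by-one error. (The same example shows the paper's follow-up claim that the first digit is at least 6 for $n > 4$ also needs a shift: $CLS_5(N) = 5400001000$. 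The final theorem about the cycle is unaffected, since it is asymptotic.)
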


\begin{proof}
The sum of digits of a life story of a life story, $CLS_2(N)$, is 10. The number of digits is also 10. Hence, such a number cannot have more than 5 different digits. Therefore, its life story, $CLS_2(N)$, has at least 5 zeroes. Hence, the next life story $CLS_3(N)$ has the first digit at least 5. This number too has to have 10 digits that sum up to 10. How many of them are different? We have at least 5 zeroes. We also have that the first digit is not less than 5. Other digits have to sum up to not more than 5. Therefore, we cannot have more than 3 different other digits. Hence, $CLS_n(N)$ has at least 6 zeroes.
\end{proof}

In particular, it means that for $n > 4$, the first digit of $CLS_n(N)$ is at least 6. At this point, it is easy to check all the possibilities proving the following theorem.

\begin{theorem}
All infinite sequences of consecutive life stories end with the cycle 6300000100, 7101001000, 6300000100, $\ldots$.
\end{theorem}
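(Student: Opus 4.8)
The plan is to leverage the preceding lemma, which guarantees that for $n > 3$ every $CLS_n(N)$ has at least 6 zeroes, and hence (as the paper already notes) that for $n > 4$ the first digit of $CLS_n(N)$ is at least 6. Once we are in this regime, the structure of a complete life story becomes extremely rigid: it is a 10-digit string whose digits sum to 10, whose first digit is at least 6, and which has at least 6 zeroes. First I would record these constraints precisely and count how many digit-multiset profiles satisfy them. Since the first digit $d_0 \ge 6$ already consumes at least 6 of the available digit-sum of 10, the remaining nine digits can sum to at most 4; combined with the requirement that the multiplicity of zero equals $d_0 \ge 6$, this leaves only a handful of admissible multiplicity vectors to enumerate.

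Next I would carry out this finite enumeration explicitly. For each candidate value of the leading digit $d_0 \in \{6,7,8,9\}$, I would list the possible ways the remaining small digit-sum can be distributed among the other positions (consistent with the self-referential constraint that the $k$-th digit counts the occurrences of $k$), apply the life-story map once to each, and check where it lands. The point of the earlier lemma is precisely that this enumeration is finite and small: we are no longer searching over all numbers but only over the few ``stable-regime'' profiles, so the whole verification reduces to applying the $CLS$ operation to a short explicit list. I would then exhibit that every such profile maps, within one or two further steps, into the two-element orbit $\{6300000100, 7101001000\}$, and verify directly that $CLS(6300000100) = 7101001000$ and $CLS(7101001000) = 6300000100$, confirming that this pair is indeed a 2-cycle.

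Finally I would assemble the argument: by the lemma, every infinite sequence of life stories eventually enters the regime $n > 4$ where the constraints above hold; by the finite check, every state in that regime is mapped into the cycle $\{6300000100, 7101001000\}$ after boundedly many steps; and since the map is deterministic and the cycle is closed, the sequence remains in the cycle thereafter. I expect the main obstacle to be the bookkeeping of the finite case analysis — making sure the enumeration of admissible 10-digit self-descriptive profiles with first digit $\ge 6$ and at least 6 zeroes is genuinely exhaustive and that no stray profile escapes into a different cycle or a longer transient. This is the step where an off-by-one error in the multiplicity counts, or an overlooked profile, could invalidate the conclusion, so it warrants a careful, systematic tabulation rather than an ad hoc check; everything else is routine application of the $CLS$ map.
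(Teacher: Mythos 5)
Your plan is structurally identical to the paper's proof: the paper derives from the lemma that for $n > 4$ the first digit of $CLS_n(N)$ is at least 6, and then simply asserts that ``it is easy to check all the possibilities''; you spell out that finite check. The problem is that the check, when actually carried out, does not yield the conclusion you claim. In the regime you describe (ten digits, digit sum 10, first digit at least 6, at least six zeroes), the admissible digit multisets with leading digit 6 are $\{6,4\}$, $\{6,3,1\}$, $\{6,2,2\}$ and $\{6,2,1,1\}$, each padded with zeroes. The first three, and all multisets with leading digit 7, 8 or 9, do fall into the 2-cycle within a few steps. But $\{6,2,1,1,0,0,0,0,0,0\}$ is the digit multiset of $6210001000$ --- the ten-digit autobiographical number from Section 2 of this very paper --- and a ten-digit autobiographical number is by definition its own complete life story. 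It satisfies every one of your stable-regime constraints (ten digits, digit sum 10, first digit $6 \ge 6$, exactly six zeroes), and it is a fixed point of the $CLS$ map, so it never enters the orbit $\{6300000100, 7101001000\}$. Concretely, any $N$ whose digits are two 1s, one 2, one 6 and six 0s (e.g.\ $N = 1126000000$, or $N = 6210001000$ itself) has a constant tail, not the claimed 2-cycle.

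So the step you yourself flagged as the risk --- ``exhibit that every such profile maps, within one or two further steps, into the two-element orbit'' --- is exactly where the argument breaks, and it breaks because the theorem as stated is too strong; the paper's one-line ``easy check'' hides the same oversight. The correct outcome of the enumeration is that every infinite sequence of life stories ends either in the fixed point $6210001000$ or in the 2-cycle $6300000100$, $7101001000$. A separate, smaller point: your parenthetical requirement that the enumerated strings satisfy ``the $k$-th digit counts the occurrences of $k$'' in themselves is wrong --- the terms of the sequence describe their predecessors, not themselves. Had you enforced self-descriptiveness as a filter, the enumeration would have found only the fixed point and missed the actual cycle entirely; the enumeration must range over all strings meeting the numeric constraints, with no self-referential condition imposed.
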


\section{Mutually-praising Pairs}

The numbers 6300000100 and 7101001000 are complete life stories of each other. These numbers are too shy to advertise themselves. But Alice praises Bob, because Bob praises Alice. It's a very advantageous flattery pattern! I will call such a pair a mutually-praising pair. We've already seen a mutually-praising pair of strings: 12 and 001. The new pair of numbers is much better, because they are legitimate numbers.
Two other examples of number pairs thriving on each others' compliments are, first, 130 and 1101, and second, 2210 and 11200.

Obviously, in a legitimate pair of mutually-praising numbers both numbers have zeroes.  Here is an exercise for you. Prove that at least one of them has to end in zero.

Another exercise is to find all mutually-praising number pairs.

\section{What's in the Name?}

Though the sequence A046043 is described as ``autobiographical (or curious)'' numbers, I completely ignored the second name, because ``autobiographical'' is more descriptive than ``curious''. I followed some links in the OEIS, but could not learn where the word ``autobiographical'' came from. 

Martin Gardner gives a puzzle about a ten-digit autobiographical number 6210001000 in his book ``Mathematical Circus'' \cite{Gardner}. Discussing the solution to his puzzle he mentions the name ``tally numbers'' for such numbers.

All autobiographical numbers are calculated by Fred Gavin in \cite{Gavin}. He calls these numbers curious numbers. I would vote against the name curious, and time will show us which name sticks better to this sequence.

\section{Acknowledgements}

I would like to thank OEIS (Online Encyclopedia of Integer Sequences) for being an extremely helpful resource in studying integer sequences. I am thankful to Jane Sherwin, Sergei Bernstein, and Sue Katz for helping me with English for this paper.

\end{document}